\author[Paolo~Leonetti]{Paolo Leonetti}
\address[P.~Leonetti]{Department of Economics, Università degli Studi dell'Insubria, via Monte Generoso 71, Varese 21100, Italy}
\email{leonetti.paolo@gmail.com}
\urladdr{\url{https://sites.google.com/site/leonettipaolo/}} 
\author[Amir Khorrami Chokami]{Amir Khorrami Chokami}
\address[A. Khorrami Chokami]{ESOMAS Department, Universit\`a degli Studi di Torino, corso Unione Sovietica 128 bis, Turin 10134, Italy \\ and Collegio Carlo Alberto, Piazza Arbarello 8, Turin 10122, Italy}
\email{amir.khorramichokami@unito.it}
\subjclass[2020]{Primary 60G70; Secondary 60B10, 60E05.}
\keywords{Weak convergence of probability measures; maximum domain of attraction; Banach--Mazur game; Baire category.}
\title{The maximum domain of attraction of multivariate extreme value distributions is small}
\newcommand{\vertiii}[1]{{\left\vert\kern-0.25ex\left\vert\kern-0.25ex\left\vert #1 
    \right\vert\kern-0.25ex\right\vert\kern-0.25ex\right\vert}}
   \def\MR#1{}
\newtheorem{thm}{Theorem}[section]
\newtheorem{cor}[thm]{Corollary}
\theoremstyle{definition} 
\newtheorem{defi}[thm]{Definition}
\newtheorem{rmk}[thm]{Remark}
\let\oldrmk\rmk
\renewcommand{\rmk}{\oldrmk\normalfont}
\newtheorem{claim}{\textsc{Claim}}
\newtheorem*{claim*}{\textsc{Claim}}
\providecommand{\MR}[1]{}
\providecommand{\MR}{\relax\ifhmode\unskip\space\fi MR }
\providecommand{\href}[2]{#2}
\newcounter{smallromans}
\newenvironment{romanenumerate}
{\begin{list}{{\normalfont\textrm{(\roman{smallromans})}}}%
  {\usecounter{smallromans}\setlength{\itemindent}{0cm}%
   \setlength{\leftmargin}{5.5ex}\setlength{\labelwidth}{5.5ex}%
   \setlength{\topsep}{.5ex}\setlength{\partopsep}{.5ex}%
   \setlength{\itemsep}{0.1ex}}}%
{\end{list}}
\newcommand{\Bigvi}{\scalebox{.8}[.8]{$\vee$}\hspace{-2.5mm}$\boxempty$}
\newcommand{\Vi}{{{\small \vee}\hspace{-2.2mm}\boxempty}}
\begin{document}

\maketitle
\thispagestyle{empty}

\begin{abstract}
    Consider the set of Borel probability measures on $\mathbf{R}^k$ and endow it with the topology of weak convergence. 
    We show that the subset of all probability measures which belong to the domain of attraction of some multivariate extreme value distributions is dense and of the first Baire category. 
    In addition, the analogue result holds in the context of free probability theory. 
\end{abstract}

%%%%%%%%%%%%%%%%%%%%%%%%%%%%%%%%%%%%%%%%%%%%%%%%%%%%%%%%%%%%%%%%%%%%%%%%%%%%

\section{Introduction}\label{sec:intro}
\subsection{Informal premise} 
It is known that there exist probability laws, e.g., the Poisson law and the geometric law, which do not belong to the maximum domain of attraction $\mathsf{D}$ of any extreme value distribution (details will be given shortly), see e.g. \cite{MR256441, MR2969061}. 
At the same time, the structure of the set $\mathsf{D}$ has been extensively studied in the literature, see for instance \cite{
MR682807, MR841599, MR2364939, MR770637, MR1484316}. 
In addition, many results of extreme value theory rely on the hypothesis that the starting distributions belong to this maximum domain of attraction, see e.g. \cite{MR2963980, MR1483700, MR2757202, MR967580}. 

Essentially, here we are going to show that $\mathsf{D}$ is a topologically small set. 

\subsection{Main result} Fix an integer $k\ge 1$, and let $\mathscr{P}$ be the set of distributions on $\mathbf{R}^k$, which is endowed with the topology $\tau$ of weak convergence, so that the weak convergence of a sequence of distributions $(F_1,F_2,\ldots)$ in $\mathscr{P}$ 
to $F \in \mathscr{P}$ means that 
$$
\lim_{n\to \infty} \int_{\mathbf{R}^k} h\,\mathrm{d}F_n=\int_{\mathbf{R}^k} h\,\mathrm{d}F
$$
for every bounded and continuous $h: \mathbf{R}^k \to \mathbf{R}$, see e.g. 
\cite[Chapter 1]{MR1700749}. 

Let $(X_1,X_2,\ldots)$ be a sequence of i.i.d. random variables with values in $\mathbf{R}^k$. 
Let $F \in \mathscr{P}$ their common distribution, so that $F(x)=\mathrm{Pr}(X_1\le x)$ for all $x \in \mathbf{R}^k$, where $\le$ stands for the product pointwise order on $\mathbf{R}^k$. 
For each $n\ge 1$, define
$$
M_n:=\max\{X_1,\ldots,X_n\}.
$$

\begin{defi}\label{defi:main}
We say that $F$ belongs to the \emph{maximum domain of attraction}, denoted by $\mathsf{D}$, if there exist a sequence of functions $(u_1,u_2,\ldots)$ with $u_n: \mathbf{R}^k \to \mathbf{R}^k$ and a distribution $G\in \mathscr{P}$ such that: 
\begin{romanenumerate}
\item for each $n\ge 1$, $u_n$ is affine on each component, that is, there exist vectors $\alpha_n, \beta_n \in \mathbf{R}^k$ such that $\alpha_{n,1},\ldots,\alpha_{n,k}>0$ and 
$$
\forall x \in \mathbf{R}^k, \quad 
u_n(x_1,\ldots,x_k)=(\alpha_{n,1}x_1+\beta_{n,1},\ldots,\alpha_{n,k}x_k+\beta_{n,k});
$$
\item each marginal of $G$ is nondegenerate;
\item the sequence of distributions of $(u_1(M_1), u_2(M_2),\ldots)$ is weakly convergent to $G$. 
\end{romanenumerate}
\end{defi}

In such case, we say that $G$ is a \emph{multivariate extreme value distribution} and that $F$ belongs to the \emph{domain of attraction of $G$}, shortened as $F \in \mathsf{D}(G)$, see e.g. 
\cite[Section 5.4]{MR2364939}.
Denoting with $\mathscr{G}$ the family of multivariate extreme value distributions, we have 
$$
\mathsf{D}=\bigcup_{G \in \mathscr{G}}\mathsf{D}(G).
$$

The main question addressed here is to check the topological largeness of $\mathsf{D}$. 
It is well known that $\mathscr{P}$ is a Polish space (i.e., a separable and completely metrizable topological space). 
Hence, it follows by the Baire category theorem that $\mathscr{P}$ is not of the first Baire category in itself, see e.g. 
\cite[Theorem 2.5.2]{MR1932358}. 
Here, we recall that a subset $S\subseteq \mathscr{P}$ is said to be of the first Baire category if there exists a countable sequence $(S_1,S_2,\ldots)$ of $\tau$-closed sets with empty interior such that $S\subseteq \bigcup_{n\ge 1}S_n$; for instance, the set of $\mathbf{Q}$ of rational numbers is a set of the first Baire category in $\mathbf{R}$ (with its usual Euclidean topology), though it is topologically dense.  

In analogy with the latter example, our main result follows: 
\begin{thm}\label{thm:mainDsmallcategory}
The set $\mathsf{D}$ is dense and of the first Baire category in $\mathscr{P}$. 
\end{thm}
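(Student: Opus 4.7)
The plan splits into proving density and meagerness of $\mathsf{D}$ separately in the Polish space $(\mathscr{P},\tau)$.

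For density, I would show that every $\tilde F$ admitting a continuous, strictly positive density on a compact box $[a_1,b_1]\times\cdots\times[a_k,b_k]$ lies in $\mathsf{D}$. Indeed, each marginal then satisfies $\bar{\tilde F}_i(x)\sim c_i(b_i-x)$ as $x\to b_i^-$, placing it in the univariate Weibull MDA; moreover, a brief inclusion--exclusion using the continuity of the joint density at the upper corner $(b_1,\ldots,b_k)$ yields that under the normalization $u_n(y)_i = n c_i (y_i-b_i)$, the law of $u_n(M_n)$ converges weakly to a product of $k$ independent reverse-exponential distributions, so $\tilde F\in\mathsf{D}$. Since every $F\in\mathscr{P}$ is weakly approximated by such $\tilde F$ via truncation to a large box, convolution with a smooth mollifier, and mixing in a small uniform on the box to enforce strict positivity, density follows.

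For meagerness, I would apply the Banach--Mazur game characterization (Oxtoby): $\mathsf{D}$ is meager iff Player II has a winning strategy in the game where the two players alternately choose nested nonempty open sets $U_1\supseteq V_1\supseteq U_2\supseteq V_2\supseteq\cdots$ of $\mathscr{P}$ and Player II wins when $\bigcap_n V_n\cap\mathsf{D}=\emptyset$. The key observation is that $F\in\mathsf{D}$ forces the first marginal $F_1$ into the univariate MDA, which in turn forces one of (i) $\bar F_1(2t)/\bar F_1(t)\to 2^{-\alpha}$ for some $\alpha>0$ (Fr\'echet case); (ii) $\bar F_1(2t)/\bar F_1(t)\to 0$ (Gumbel with infinite upper endpoint); (iii) $F_1$ has finite upper endpoint (Gumbel with finite upper endpoint, or Weibull). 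Player II's strategy fixes in advance a sequence of scales $t_n\to\infty$ and, at stage $n$, selects $V_n\subseteq U_n$ as a small open neighborhood of a perturbation $\tilde F_n=(1-\eta_n)F_n^\ast+\eta_n\delta_{\xi_n}$ of any reference $F_n^\ast\in U_n$, where $\xi_n\in\mathbf{R}^k$ has first coordinate placed beyond $2t_n$ at odd rounds (to force $\bar F_1(2t_n)/\bar F_1(t_n)\ge 1/2$) and strictly between $t_n$ and $2t_n$ at even rounds (to force $\bar F_1(2t_n)/\bar F_1(t_n)\le 1/100$). Each such constraint, together with $\bar F_1(t_n)>0$, is encoded by strict inequalities of integrals against continuous cutoff functions and is therefore $\tau$-open, and shrinking $\eta_n$ keeps $\tilde F_n\in U_n$. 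Any measure in $\bigcap_n V_n$ then has an oscillating tail ratio along $(t_n)$ and infinite first-coordinate upper endpoint, defeating all three alternatives, so lies outside $\mathsf{D}$.

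The main obstacle is the meagerness direction: one must verify that the perturbations producing the tail oscillation can be realized as open conditions compatible with an arbitrary Player I move $U_n$, and that the countably many accumulating constraints propagate coherently to the limit intersection. The density direction is essentially routine once the Weibull-type MDA calculation above is in hand.
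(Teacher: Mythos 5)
Your density argument is correct but takes a genuinely different route from the paper. You approximate an arbitrary $F$ by laws with continuous, strictly positive densities on a compact box and verify membership in the Weibull-type domain via the corner asymptotics $1-F(b+x/(nc))\sim -\sum_i x_i/n$; the limit is the product of reverse exponentials, a legitimate element of $\mathscr{G}$. The paper instead performs surgery on $F$ itself: it keeps $F$ on the lowest $1-\varepsilon/2$ of its mass and pastes a shifted Gaussian tail $\Phi^k$ on top, landing in $\mathsf{D}(G_0^k)$ while staying inside an explicit L\'{e}vy ball. Your version buys a cleaner "generic smooth distributions are attracted" statement at the cost of the multivariate inclusion--exclusion computation and the truncation/mollification/mixture approximation; both are sound.

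The meagerness half has the same architecture as the paper (Banach--Mazur game, necessary tail conditions on the first marginal, atomic perturbations forcing tail-ratio oscillation), and your trichotomy (i)--(iii) is a correct covering of $\mathsf{D}$ (though (ii), rapid variation of Gumbel tails with infinite endpoint, needs a citation; the paper uses the more elementary consequence $\int_0^\infty(1-F(z))\,\mathrm{d}z<\infty$ instead). But the step ``fixes in advance a sequence of scales $t_n$'' would fail, and it is precisely the point you flag as the main obstacle. With $\tilde F_n=(1-\eta_n)F_n^\ast+\eta_n\delta_{\xi_n}$ and $\eta_n$ forced small to keep $\tilde F_n\in U_n$, you only get $\bar{\tilde F}_{n,1}(2t_n)\ge\eta_n$, while $\bar{\tilde F}_{n,1}(t_n)\ge(1-\eta_n)\bar F_{n,1}^\ast(t_n)$; nothing stops Player I from choosing $U_n$ so that every element has, say, $\bar F_1(t_n)\ge 1/2$ at your pre-fixed $t_n$, in which case the odd-round ratio is at most about $2\eta_n$ rather than at least $1/2$, and the even-round bound fails symmetrically. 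The scale must be chosen \emph{after} seeing $U_n$: pick a ball $B(F_n,\varepsilon_n)\subseteq U_n$, take $t_n$ beyond the $(1-\varepsilon_n/2)$-quantile of $F_n$, and move the entire residual tail mass $\varepsilon_n/2$ to a single atom past $2t_n$ (or into $(t_n,2t_n)$), so that the atom dominates the reference tail by construction. This adaptive choice is exactly the paper's $2^{n_m}>1+F_m^{\leftarrow}(1-\varepsilon_m/2)$ together with $\mu_{G_m}(\{2^{n_m+1}+1\})=\varepsilon_m/2$, and it is the one missing idea in your sketch. Your second worry, that the accumulating constraints must ``propagate coherently to the limit,'' is not actually an issue: each constraint is an open set containing some $V_n$, hence contains $\bigcap_m V_m$ automatically by nestedness.
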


We provide an application for domain of attractions in the context of free probability theory, see e.g. \cite{MR2594350, MR2271490, MR1709310}. For, set $k=1$. Given $F,G \in \mathscr{P}$, define $H:=F$ \Bigvi \,$G \in \mathscr{P}$ by 
$$
\forall x \in \mathbf{R}, \quad 
H(x):=\max\{0,F(x)+G(x)-1\},
$$
see \cite[Definition 3.1]{MR2271490}. Let also $F^{\,\Vi n}$ be the $n$th fold $F$ \Bigvi\, $\cdots$ \,\Bigvi\,$F$. 
With these premises, a distribution $F \in\mathscr{P}$ is said to be in the \emph{maximum free domain of attraction} $\mathsf{D}^{\text{free}}$ if there exist a positive real sequence $(a_n)_{n\ge 1}$, a real sequence $(b_n)_{n\ge 1}$, and a distribution $G \in \mathscr{P}$ such that 
\begin{equation}\label{eq:freeconstraint}
\lim_{n\to \infty} F^{\,\Vi n}\left(a_nx+b_n\right)=G(x)
\end{equation}
for all continuity points $x \in \mathbf{R}$ of $G$. 
\begin{cor}\label{cor:freeprobability}
The set $\mathsf{D}^{\text{free}}$ is dense and of the first Baire category in $\mathscr{P}$. 
\end{cor}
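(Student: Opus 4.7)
The plan is to reduce Corollary \ref{cor:freeprobability} directly to Theorem \ref{thm:mainDsmallcategory} by proving the set-theoretic identity $\mathsf{D}^{\text{free}} = \mathsf{D}$ in $\mathscr{P}$ (with $k = 1$ in Theorem \ref{thm:mainDsmallcategory}). The first step is to produce an explicit closed form for the iterated free max-convolution: a direct induction on the definition of $F$ \Bigvi\, $G$ should yield
\[
F^{\,\Vi n}(x) = \max\{0,\, nF(x) - (n-1)\}
\]
for every $x \in \mathbf{R}$ and every $n \ge 1$ (the base case is the definition, and the inductive step amounts to a case-split on whether $F(x) > (n-1)/n$).

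Substituting this expression into \eqref{eq:freeconstraint} and rewriting in terms of the tail $1 - F$ transforms the free convergence condition into the classical tail-convergence statement
\[
\lim_{n \to \infty} n\bigl(1 - F(a_n x + b_n)\bigr) = 1 - G(x)
\]
at every continuity point of $G$ with $G(x) > 0$. This is precisely the classical criterion characterizing $F \in \mathsf{D}$: if $\tilde G$ denotes the classical extreme value limit of $F$ along the same normalizing sequences, then $n(1 - F(a_nx+b_n)) \to -\log \tilde G(x)$, so that the free and classical limits are tied by $G(x) = \max\{0,\, 1 + \log \tilde G(x)\}$ (this is the Ben Arous--Voiculescu correspondence \cite{MR2271490} between classical and free max-stable laws). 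Since the transformation $\tilde G \mapsto \max\{0, 1 + \log \tilde G\}$ is a monotone bijection between nondegenerate classical extreme value distributions and nondegenerate free extreme value distributions that preserves the normalizing sequences $(a_n)$ and $(b_n)$, we conclude $\mathsf{D}^{\text{free}} = \mathsf{D}$.

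Once this identity is in hand, Corollary \ref{cor:freeprobability} is an immediate consequence of Theorem \ref{thm:mainDsmallcategory} applied with $k = 1$; no new category argument is required. The main technical obstacle is the verification of the identity $\mathsf{D}^{\text{free}} = \mathsf{D}$, specifically the care needed to match the nondegeneracy hypothesis across the two settings and to handle the saturation of the $\max\{0, \cdot\}$ truncation at points where $G$ vanishes; both issues resolve via the monotone correspondence above and the fact that the set where the truncation is active corresponds exactly to the interval $\{x : \tilde G(x) \le 1/e\}$, which does not affect the characterization of $F$ as a member of $\mathsf{D}$.
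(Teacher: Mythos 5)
Your proposal is correct and its overall architecture is identical to the paper's: both reduce the corollary to the identity $\mathsf{D}^{\text{free}}=\mathsf{D}$ and then invoke Theorem \ref{thm:mainDsmallcategory} with $k=1$. The difference is how that identity is obtained. The paper simply cites \cite[Theorems 6.11--6.13]{MR2271490}, whereas you sketch a self-contained derivation via the (correct) closed form $F^{\,\Vi n}(x)=\max\{0,\,nF(x)-(n-1)\}$ and the resulting tail condition $n(1-F(a_nx+b_n))\to 1-G(x)$ on $\{G>0\}$. This is an honest re-derivation of the Ben Arous--Voiculescu correspondence, and it buys transparency at the cost of two details you should pin down. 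First, nondegeneracy: the paper's displayed definition of $\mathsf{D}^{\text{free}}$ omits it, but it must be imposed (as in \cite{MR2271490}), since otherwise degenerate limits make the free condition vacuous; your argument needs $G$ nonconstant so that $\{x: 0<G(x)<1\}$ is a nonempty interval on which the limit $h(x):=1-G(x)$ is finite, positive and nonconstant. Second, the step ``this is precisely the classical criterion'' is doing real work in the direction $\mathsf{D}^{\text{free}}\subseteq\mathsf{D}$: from the free hypothesis you only control $n(1-F(a_nx+b_n))$ on $\{G>0\}$ (on $\{G=0\}$ you merely get $\liminf\ge 1$), so you must appeal to the classical convergence-of-types/Gnedenko theory (e.g.\ \cite[Theorem 1.1.2]{MR2234156}) to conclude that convergence to a nonconstant finite limit on an interval already forces $F\in\mathsf{D}$ and pins down $\tilde G$ on all of $\mathbf{R}$, including the saturated region $\{\tilde G\le 1/e\}$. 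With those two points made explicit your proof is complete and is essentially an unpacking of the citation the paper relies on.
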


Results in the same spirit, but completely different contexts, appeared, e.g., in \cite{AveniLeo22, MR4453356, Leo19, LeoTAUB}. 
In Section \ref{sec:preliminaries} we collect some preliminary results, while in Section \ref{sec:proof} we provide the proofs of Theorem \ref{thm:mainDsmallcategory} and Corollary \ref{cor:freeprobability}. 

%%%%%%%%%%%%%%%%%%%%%%%%%%%%%%%%%%%%%%%%%%%%%%%%%%%%%%%%
%%%%%%%%%%%%%%%%%%%%%%%%%%%%%%%%%%%%%%%%%%%%%%%%%%%%%%%%

\section{Preliminaries}\label{sec:preliminaries}

The key tool in the proof of our main result will be the following topological game:
\begin{thm}\label{thm:banachmazur}
Pick a subset $\mathscr{S}\subseteq \mathscr{P}$ and consider the Banach--Mazur game defined as follows\textup{:} 
Players I and II choose alternatively nonempty open subsets of $\mathscr{P}$ as a nonincreasing chain 
$$
U_1\supseteq V_1 \supseteq U_2 \supseteq V_2\supseteq \cdots, 
$$
where Player I chooses the sets $U_1,U_2,\ldots$; Player II has a winning strategy if 
$$
\bigcap\nolimits_{m\ge 1} V_m \cap \mathscr{S}=\emptyset.
$$ 

Then Player II has a winning strategy if and only if $\mathscr{S}$ is a set of the first Baire category in $\mathscr{P}$.
\end{thm}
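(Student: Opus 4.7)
The plan is to prove both directions of the equivalence, using the fact that $\mathscr{P}$ is Polish (hence separable and Baire).

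For the direction \emph{meager $\Rightarrow$ winning strategy}, I would write $\mathscr{S}\subseteq \bigcup_{n\ge 1}S_n$ where each $S_n$ is closed with empty interior, and define Player II's strategy as follows. Given Player I's move $U_n$, the set $U_n\setminus S_n$ is nonempty and open because $S_n$ is nowhere dense; II picks any nonempty open $V_n\subseteq U_n\setminus S_n$. Then for every $n$ one has $\bigcap_{m\ge 1}V_m\subseteq V_n$, which is disjoint from $S_n$; hence $\bigcap_{m\ge 1}V_m\cap \mathscr{S}\subseteq \bigcap_{m\ge 1}V_m\cap \bigcup_{n\ge 1}S_n=\emptyset$, and II wins.

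For the converse, fix a winning strategy $\sigma$ for II and construct inductively a tree of partial plays whose level-$n$ responses form a dense open subset $W_n$ of $\mathscr{P}$. At level $1$, Zorn's lemma provides a maximal family $\{U_1^{\alpha}\}_{\alpha\in A_1}$ of nonempty open subsets of $\mathscr{P}$ such that the responses $V_1^{\alpha}:=\sigma(U_1^{\alpha})$ are pairwise disjoint; separability of $\mathscr{P}$ forces $A_1$ to be countable, and maximality forces $W_1:=\bigcup_{\alpha}V_1^{\alpha}$ to be dense in $\mathscr{P}$. Inductively, for each partial history ending with $V_n^{\alpha_1,\ldots,\alpha_n}$, apply the same construction inside that open set to obtain a countable pairwise disjoint family of II's level-$(n+1)$ responses whose union is dense in $V_n^{\alpha_1,\ldots,\alpha_n}$; let $W_{n+1}$ be the union of these responses over all histories. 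Then each $W_n$ is dense open. For any $x\in\bigcap_{n\ge 1}W_n$, the pairwise disjointness at each level singles out a unique sequence of indices $(\alpha_1,\alpha_2,\ldots)$ with $x\in V_n^{\alpha_1,\ldots,\alpha_n}$ for every $n$; by construction these sets are nested, so the moves $U_1^{\alpha_1},U_2^{\alpha_1,\alpha_2},\ldots$ of I together with II's responses constitute a legal play in which II follows $\sigma$. Since $\sigma$ is winning, $x\notin \mathscr{S}$, and consequently $\mathscr{S}\subseteq \bigcup_{n\ge 1}(\mathscr{P}\setminus W_n)$ is of the first Baire category.

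The main obstacle lies in the converse direction: orchestrating the inductive construction so that pairwise disjointness at each level, density of $W_n$ in $\mathscr{P}$, and nestedness across levels hold simultaneously, and then verifying that every point of $\bigcap_n W_n$ genuinely determines a single branch in the tree whose associated play is legal against $\sigma$. The separability of $\mathscr{P}$ is essential at each step to keep the branching countable, so that the union of the level-$n$ responses is open and the cardinality of the tree remains manageable.
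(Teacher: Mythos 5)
Your argument is correct: it is the standard Banach--Mazur--Oxtoby proof (the easy direction via a countable cover by closed nowhere dense sets, the converse via a Zorn-maximal pairwise disjoint tree of plays following $\sigma$ whose level-$n$ responses give dense open sets $W_n$), which is exactly the content of Kechris's Theorem~8.33 that the paper invokes without reproving. One cosmetic remark: separability is not actually needed anywhere --- arbitrary unions of open sets are open, and the final meager cover $\bigcup_{n\ge 1}(\mathscr{P}\setminus W_n)$ is countable because it is indexed by the levels $n$, not by the branches of the tree.
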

\begin{proof}
It follows by \cite[Theorem 8.33]{MR1321597}. 
\end{proof}

%%%%%%%%%%%%%%%%%%%%%%%%%%%%%%%%%%%%%%%%%%%%%%%
In the following results, we are assuming that $k=1$ (they will be used explictly in the proof of Theorem \ref{thm:mainDsmallcategory}). 

\begin{thm}\label{thm:limitdistributions}
Set $k=1$. Then $\mathscr{G}=\{G_\gamma: \gamma \in \mathbf{R}\}$, where 
$$
\forall x \in \mathbf{R} \text{ with }1+\gamma x>0, \quad 
G_\gamma(x)=\mathrm{exp}\left(-(1+\gamma x)^{-1/\gamma}\right).
$$
In the case $\gamma=0$, it is interpreted as $G_0(x)=\mathrm{exp}(-e^{-x})$. 
\end{thm}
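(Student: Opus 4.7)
The plan is to prove the classical Fisher--Tippett--Gnedenko classification of univariate extreme value distributions in three stages: establishing max-stability of any limit $G \in \mathscr{G}$, reducing the problem to a functional equation via the convergence-to-types theorem, and finally solving that functional equation and re-parametrizing in the unified GEV form.

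First I would extract the \textbf{max-stability property}. Suppose $G \in \mathscr{G}$, so there exist $F \in \mathscr{P}$, positive scalars $\alpha_n$ and reals $\beta_n$ with $F^n(\alpha_n x + \beta_n) \to G(x)$ at every continuity point (this is the $k=1$ instance of Definition \ref{defi:main}, since $\mathrm{Pr}(M_n \le y)=F^n(y)$). Applied along the subsequence $nm$ for any fixed integer $m\ge 1$, one also has $F^{nm}(\alpha_{nm} x + \beta_{nm}) \to G(x)$, hence $F^{nm}(\alpha_n x + \beta_n) \to G(x)^{1/m}$. Khintchine's convergence-to-types theorem (a standard companion result in weak convergence, see e.g. \cite[Chapter 1]{MR1700749}) then yields positive constants $a_m$ and reals $b_m$ such that
\[
\forall x \in \mathbf{R},\quad G(x)^{1/m} = G(a_m x + b_m),
\]
at every continuity point; equivalently $G^m(a_m^{-1}(x-b_m))=G(x)$, which is the defining max-stability relation.

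Next I would \textbf{solve this functional equation}. Taking logarithms and writing $\psi(x) := -\log G(x)$ on the interior of the support of $G$, the relation becomes the multiplicative-affine equation $m\,\psi(a_m x + b_m)=\psi(x)$. A standard case split on whether the affine maps $x \mapsto a_m x + b_m$ have a common fixed point (and on whether $a_m=1$) yields three families:
\begin{romanenumerate}
\item $a_m \neq 1$ for some $m$: let $x_0$ be the fixed point. Reducing via $y=x-x_0$ gives $m\psi(a_m y)=\psi(y)$; iterating over rationals and using monotonicity of $\psi$ forces $\psi(y)=c\,|y|^{-1/\gamma}$ on the appropriate half-line, corresponding to a Fr\'echet law ($\gamma>0$) if $y>0$ or a reverse Weibull law ($\gamma<0$) if $y<0$.
\item $a_m=1$ for all $m$: then $m\,\psi(x+b_m)=\psi(x)$, which by the same rational-iteration argument forces $\psi(x)=c\,e^{-x/\sigma}$, giving the Gumbel law $\gamma=0$.
\end{romanenumerate}
These three one-parameter families are precisely the three classical types.

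Finally I would \textbf{unify and normalize} the three types. By the permissible affine change of variable $x \mapsto \alpha x + \beta$ in the limit (which does not alter the class $\mathscr{G}$, since any affine reparametrization can be absorbed into the normalizing sequences $(u_n)$ from Definition \ref{defi:main}), each type is brought to the canonical form $G_\gamma$ displayed in the statement: the substitution $y = (1+\gamma x)^{1/\gamma}$ for $\gamma \neq 0$ transforms Fr\'echet and Weibull to the claimed expression, and the limit $\gamma \to 0$ (equivalently, the Gumbel type under the substitution $y=e^x$) gives $G_0(x)=\exp(-e^{-x})$. Conversely, a direct check shows that each $G_\gamma$ is itself max-stable and thus arises as a limit (take $F=G_\gamma$ with $\alpha_n,\beta_n$ chosen from the max-stability equation), confirming $\mathscr{G}\supseteq\{G_\gamma:\gamma\in\mathbf{R}\}$.

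The \textbf{main obstacle} is the functional-equation step: one must rigorously rule out pathological solutions by using the monotonicity of the distribution function together with density of the iterates $\{a_{m_1}^{\,\pm 1}\cdots a_{m_j}^{\,\pm 1}\}$ in the multiplicative group of positive reals (and analogously for the additive case). Everything else is case analysis and a change of variable.
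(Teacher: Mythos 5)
The paper does not actually prove this statement: it is the classical Fisher--Tippett--Gnedenko theorem, and the authors dispose of it by citing Gnedenko and de Haan--Ferreira (Theorem 1.1.3). Your outline is precisely the proof given in those references --- max-stability via Khintchine's convergence-to-types theorem, reduction to a Cauchy-type functional equation, a case split on the scaling constants, and renormalization into the unified GEV form --- so it matches the cited proof rather than offering a different route. Two points deserve attention. First, there is a slip in the max-stability step: $F^{nm}(\alpha_n x+\beta_n)=\bigl(F^n(\alpha_n x+\beta_n)\bigr)^m\to G(x)^m$, not $G(x)^{1/m}$; the standard comparison is between $F^n(\alpha_{nm}x+\beta_{nm})=\bigl(F^{nm}(\alpha_{nm}x+\beta_{nm})\bigr)^{1/m}\to G(x)^{1/m}$ and $F^n(\alpha_n x+\beta_n)\to G(x)$, which by convergence-to-types gives $G(x)^{1/m}=G(a_mx+b_m)$. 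The functional equation you arrive at is still a correct form of max-stability, but the intermediate limit as written is wrong. Second, essentially all of the analytic content of the theorem sits in the step you defer: one must show that the maps $x\mapsto a_mx+b_m$ are consistent across $m$ (e.g. $a_{mn}=a_ma_n$, and in the case $a_m\not\equiv 1$ that all these maps share a common fixed point), deduce $a_m=m^{-\gamma}$ from the multiplicative Cauchy equation using monotonicity, and rule out degenerate or pathological solutions; you correctly flag this as the main obstacle, but as it stands the proposal is an accurate roadmap of the classical argument rather than a complete proof. Finally, note that the equality $\mathscr{G}=\{G_\gamma:\gamma\in\mathbf{R}\}$ is to be read up to affine type, as in the cited sources --- the convention you implicitly adopt when absorbing the change of variables into the norming sequences.
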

\begin{proof}
See \cite[Theorem 3]{MR8655}, or \cite[Theorem 1.1.3]{MR2234156} for a textbook exposition. 
\end{proof}

Accordingly, let $\{\mathsf{D}_+, \mathsf{D}_-, \mathsf{D}_0\}$ be a partition of $\mathsf{D}$ so that 
$$
\mathsf{D}_+:=\bigcup_{\gamma>0}\mathsf{D}(G_\gamma), \quad 
\mathsf{D}_-:=\bigcup_{\gamma<0}\mathsf{D}(G_\gamma), \quad 
\text{ and }\quad 
\mathsf{D}_0:=\mathsf{D}(G_0). 
$$
Lastly, 
for each $F \in \mathscr{P}$, define $x_F:=\sup\{x\in \mathbf{R}: F(x)<1\}$ and set 
$$
\mathsf{M}:=\{F \in \mathscr{P}: x_F<\infty\}.
$$

\begin{thm}\label{thm:necessaryconditions}
Set $k=1$ and pick a distribution $F \in \mathscr{P}$. 
\begin{enumerate}[label={\rm (\roman*)}]
\item \label{item:1necessary} If $F \in \mathsf{D}_{+}$ then $F\notin \mathsf{M}$ and
$$
\lim_{n\to \infty}\frac{1-F(2^{n+1})}{1-F(2^{n})}
$$
exists and belongs to $(0,1)$\textup{;}
\item \label{item:2necessary} If $F \in \mathsf{D}_{-}$ then $F \in \mathsf{M}$\textup{;} 
\item 
\label{item:3necessary} If $F \in \mathsf{D}_0\setminus \mathsf{M}$ then 
\begin{equation}\label{eq:limitgumbel}
\lim_{t\to \infty}\frac{1-F(t+f(t))}{1-F(t)}=\frac{1}{e},
\end{equation}
where 
$$
\forall t\in \mathbf{R}, \quad f(t):=\frac{\int_t^{\infty}(1-F(z))\,\mathrm{d}z}{1-F(t)}.
$$
In particular, $\int_0^\infty (1-F(z))\,\mathrm{d}z<\infty$. 
\end{enumerate}
\end{thm}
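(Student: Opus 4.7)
The plan is to reduce each item to a classical characterization of the univariate maximum domains of attraction, all collected in \cite[Chapter 1]{MR2234156}.

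For items (i) and (ii), I appeal to the Gnedenko-type characterizations: $F\in \mathsf{D}(G_\gamma)$ with $\gamma>0$ if and only if $x_F=\infty$ and $1-F$ is regularly varying at infinity with index $-1/\gamma$, that is, $\lim_{t\to\infty}(1-F(ct))/(1-F(t))=c^{-1/\gamma}$ for every $c>0$; while $F\in \mathsf{D}(G_\gamma)$ with $\gamma<0$ forces $x_F<\infty$. The first statement immediately yields $F\notin \mathsf{M}$, and specialization to $c=2$ and $t=2^n$ shows that the limit in (i) exists and equals $2^{-1/\gamma}\in (0,1)$. The second statement directly gives $F\in \mathsf{M}$ in (ii).

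For item (iii), I invoke de Haan's theorem that $F\in \mathsf{D}(G_0)$ if and only if there exists a positive measurable function $a$ with $\lim_{t\to x_F}(1-F(t+x\,a(t)))/(1-F(t))=e^{-x}$ for every $x\in\mathbf{R}$. Since $F\in \mathsf{D}_0\setminus \mathsf{M}$, we have $x_F=\infty$ and $1-F$ is rapidly varying at infinity, hence all positive moments are finite; in particular $\int_0^\infty (1-F(z))\,\mathrm{d}z<\infty$, so the function $f$ is well defined. One then checks, via the representation theorem for the Gumbel class, that the mean excess function $a(t):=f(t)$ is an admissible choice of auxiliary function; taking $x=1$ yields \eqref{eq:limitgumbel}.

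The main (though minor) obstacle is justifying that the specific choice $a=f$ works in (iii): this relies on the rapid variation of $1-F$ together with the standard fact that $f$ is then self-neglecting, and is a classical result in the extreme value literature.
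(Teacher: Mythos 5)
Your treatment of (i) and (ii) is exactly the paper's: both reduce to the Gnedenko/de Haan--Ferreira characterizations (\cite[Theorem 1.2.1]{MR2234156}), with $c=2$ and $t=2^n$ giving the limit $2^{-1/\gamma}\in(0,1)$. For (iii) you take a slightly longer route than necessary. The ``main obstacle'' you flag --- verifying that the mean excess function $f$ is an admissible auxiliary function --- is not something you need to check separately: \cite[Theorem 1.2.5]{MR2234156}, which is what the paper cites, already states the Gumbel-domain characterization \emph{with this specific choice} of auxiliary function, so \eqref{eq:limitgumbel} is literally the case $x=1$ of that theorem. Your detour through the general de Haan criterion with an arbitrary auxiliary function $a$, followed by an appeal to rapid variation and self-neglect to upgrade to $a=f$, is correct but re-derives the content of the cited result. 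Likewise, for the ``in particular'' claim the paper uses a one-line contradiction: if $\int_0^\infty(1-F(z))\,\mathrm{d}z=\infty$ then $f\equiv+\infty$ and the ratio in \eqref{eq:limitgumbel} would tend to $0$ rather than $1/e$; your argument via rapid variation of $1-F$ (hence finiteness of all moments) is also valid but invokes an extra classical fact. In short: no gap, same skeleton, with (iii) streamlined in the paper by citing the form of de Haan's theorem that already builds in the function $f$.
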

\begin{proof}
Point \ref{item:1necessary} follows by \cite[Theorem 1.2.1(1)]{MR2234156} setting $x=2$ along the subsequence $(2^n)$ of Equation (1.2.2). 
Point \ref{item:2necessary} is a consequence of  \cite[Theorem 1.2.1(2)]{MR2234156}. 
Finally, point \ref{item:3necessary} follows by setting $x=1$ in \cite[Theorem 1.2.5]{MR2234156}. Note that $1-F(t) \in (0,1)$ for all $t\ge 0$ since $F\notin \mathsf{M}$. 
The last "In particular" claim is obtained by the fact that, if $\int_0^\infty(1-F(z))\,\mathrm{d}z$ were not finite, then $f(t)=+\infty$ for all $t\ge 0$ and the limit in \eqref{eq:limitgumbel} would be equal to $0$. 
\end{proof}

%%%%%%%%%%%%%%%%%%%%%%%%%%%%%%%%%%%%%%%%%%
\section{Main proofs}\label{sec:proof}

\begin{proof}
[Proof of Theorem \ref{thm:mainDsmallcategory}] 
Recall that $\mathscr{P}$ is a Polish space and, in addition, its weak topology $\tau$ is generated by the \emph{L\'{e}vy metric} $d$ defined by 
$$
d(F,G):=\inf\{\varepsilon>0: \forall x \in \mathbf{R}^k, F(x-\varepsilon e)-\varepsilon \le G(x) \le F(x+\varepsilon e)+\varepsilon\}
$$
for all $F,G \in \mathscr{P}$, where $e:=(1,\ldots,1) \in \mathbf{R}^k$, 
cf. \cite[Problem 8, p. 398]{MR1932358}.

Accordingly, for each distribution $F \in \mathscr{P}$ and $\varepsilon\in (0,1)$, let
$$
B(F,\varepsilon):=\{G \in \mathscr{P}: d(F,G)<\varepsilon\}
$$
be the open ball with center $F$ and radius $\varepsilon$, and define the set
\begin{displaymath}
\begin{split}
T(F,\varepsilon):=\{G \in \mathscr{P}: G(x)=F(x) \text{ for all }&x < F^{\leftarrow}(1-\varepsilon)e \\
&\text{ and }G(F^{\leftarrow}(1-\varepsilon)e)\ge 1-\varepsilon\},
\end{split}
\end{displaymath}
where $x<y$ means $x\le y$ and $x\neq y$, and 
$F^{\leftarrow}$ is defined by 
$$
\forall x \in \mathbf{R}\quad 
F^{\leftarrow}(x):=\inf\{t \in \mathbf{R}: F(te)\ge x\}
$$
(hence, $F^{\leftarrow}$ is the generalized inverse acting on the main diagonal of $\mathbf{R}^k$). 
Informally, $T(F,\varepsilon)$ is the family of distributions on $\mathbf{R}^k$ which coincide with $F$ for the initial $(1-\varepsilon)$ amount of probability in leftmost bottom part. 
\begin{claim}\label{claim:TincludedB}
Pick $0<\delta<\varepsilon<1$ and $F \in \mathscr{P}$. Then $T(F,\delta)\subseteq B(F,\varepsilon)$. 
\end{claim}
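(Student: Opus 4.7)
The plan is to prove the sharper inequality $d(F,G) \le \delta$, which gives $G \in B(F,\varepsilon)$ since $\delta < \varepsilon$. Write $t_0 := F^{\leftarrow}(1-\delta)$. By the definition of $T(F,\delta)$ and the right-continuity of $F$, we have $G(x)=F(x)$ for all $x < t_0 e$, and both $F(t_0 e)$ and $G(t_0 e)$ are at least $1-\delta$.

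Taking $\eta := \delta$, I would verify the L\'{e}vy inequality $F(x-\eta e) - \eta \le G(x) \le F(x+\eta e)+\eta$ at each $x \in \mathbf{R}^k$ by splitting on whether $x \ge t_0 e$. If $x \ge t_0 e$, then $F(x), G(x) \in [1-\delta, 1]$ and the two inequalities are immediate from $F(x \pm \eta e) \ge F(t_0 e) \ge 1 - \delta$ together with $\eta = \delta$. If instead $x \not\ge t_0 e$, then the componentwise minimum $x^{\wedge} := x \wedge t_0 e$ satisfies $x^{\wedge} < t_0 e$, so $F(x^{\wedge}) = G(x^{\wedge})$, and the inequalities reduce (by monotonicity of $F$) to showing $|F(x)-G(x)| \le \delta$.

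The one nontrivial ingredient is a multivariate mass-transfer bound, namely $H(x) - H(x^{\wedge}) \le 1 - H(t_0 e) \le \delta$ for $H \in \{F, G\}$. Sampling $Y \sim H$, the event $\{Y \le x\} \setminus \{Y \le x^{\wedge}\}$ forces some coordinate $Y_i$ to exceed $t_0$, so this event is contained in $\{Y \not\le t_0 e\}$, whose mass is at most $\delta$. Combined with the identity $F(x^{\wedge}) = G(x^{\wedge})$, this confines both $F(x)$ and $G(x)$ to the interval $[F(x^{\wedge}), F(x^{\wedge})+\delta]$, yielding $|F(x) - G(x)| \le \delta$ as needed. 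I expect this mass-transfer bound to be the main obstacle; it is vacuous when $k=1$ because $x \wedge t_0$ is just $\min(x, t_0)$, but in general dimension one must control the possibility of $x$ having some coordinates above and some below $t_0$. Once the bound is in hand, the verification of the L\'{e}vy inequality is mechanical.
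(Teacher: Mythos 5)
Your proof is correct, and its core is the same as the paper's: verify the L\'{e}vy-metric inequality directly with parameter $\eta=\delta$, splitting on the position of $x$ relative to $t_0e=F^{\leftarrow}(1-\delta)e$, and conclude $d(F,G)\le\delta<\varepsilon$. The difference is that you are more thorough than the published argument. The paper's proof only writes out the chain of inequalities for $x\ge t_0e$ (the case $x<t_0e$ being immediate since $F$ and $G$ agree there), and says nothing about vectors $x$ that are comparable to $t_0e$ in neither direction --- a case that is nonempty as soon as $k\ge 2$. Your mass-transfer bound $H(x)-H(x\wedge t_0e)\le 1-H(t_0e)\le\delta$ for $H\in\{F,G\}$, justified by noting that the event $\{Y\le x\}\setminus\{Y\le x\wedge t_0e\}$ forces some coordinate of $Y$ to exceed $t_0$, is exactly what is needed there, and combined with $F(x\wedge t_0e)=G(x\wedge t_0e)$ it correctly yields $|F(x)-G(x)|\le\delta$. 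So your write-up fills a small gap in the paper's proof rather than introducing one. One cosmetic slip: in the case $x\ge t_0e$, the lower inequality $F(x-\delta e)-\delta\le G(x)$ follows from $F(x-\delta e)\le 1$ together with $G(x)\ge 1-\delta$, not from $F(x-\delta e)\ge F(t_0e)$ (which need not hold, since $x-\delta e$ may fail to dominate $t_0e$); this does not affect the validity of the argument.
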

\begin{proof}
Fix $G\in T(F,\delta)$ and observe that 
\begin{displaymath}
\begin{split}
    F(x-\delta e)-\delta &\le 1-\delta\le G(F^{\leftarrow}(1-\delta)e)\le G(x)\\
    &\le \left(1-\delta\right)+\delta \le F(F^{\leftarrow}(1-\delta)e)+\delta\le F(x+\delta e)+\delta 
\end{split}
\end{displaymath}
for all vectors $x \ge F^{\leftarrow}(1-\delta)e$. This proves that $d(F,G) \le \delta$, therefore $G \in B(F,\varepsilon)$. 
\end{proof}

For each distribution $F \in \mathscr{P}$, we let $\mu_F$ be the unique probability measure defined on the Borel subsets $\mathscr{B}$ of $\mathbf{R}^k$ such that 
$$
\forall x \in \mathbf{R}^k, \quad 
F(x)=\mu_F\left(\,\prod\nolimits_{i=1}^k (-\infty,x_i]\right).
$$

\bigskip 

\textsc{Category of $\mathsf{D}$: case $k=1$.} First, let us prove that $\mathsf{D}$ is a set of the first Baire category for the one-dimensional case, hence let us assume hereafter that $k=1$. 

\begin{claim}\label{claim:Mnowheredense}
$\mathsf{M}$ is a set of the first Baire category in $\mathscr{P}$. 
\end{claim}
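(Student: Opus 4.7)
The plan is to exhibit $\mathsf{M}$ as a countable union of $\tau$-closed sets, each with empty interior. For each positive integer $n$, define
$$
\mathsf{M}_n := \{F \in \mathscr{P} : F(n) = 1\}.
$$
Since every distribution function is right-continuous, $x_F < \infty$ is equivalent to $F(n) = 1$ for some integer $n$, so $\mathsf{M} = \bigcup_{n \ge 1} \mathsf{M}_n$. It therefore suffices to show that each $\mathsf{M}_n$ is $\tau$-closed with empty interior.

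For the closedness of $\mathsf{M}_n$, I would appeal to the portmanteau theorem applied to the open set $(n, \infty) \subseteq \mathbf{R}$: if $F_j \to F$ weakly with each $F_j \in \mathsf{M}_n$, then
$$
\mu_F((n,\infty)) \le \liminf_{j \to \infty} \mu_{F_j}((n,\infty)) = 0,
$$
whence $F(n) = 1$ and $F \in \mathsf{M}_n$.

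For the empty interior, I would perturb any $F \in \mathsf{M}_n$ by placing a small atom far to the right of $n$. Fix $\varepsilon \in (0,1)$ and some $M > n$, and set
$$
G := (1-\varepsilon/2)F + (\varepsilon/2)\delta_M.
$$
Then $G(n) = 1 - \varepsilon/2 < 1$, so $G \notin \mathsf{M}_n$. A direct computation gives $|F(x) - G(x)| \le \varepsilon/2$ for every $x \in \mathbf{R}$, so the L\'{e}vy distance satisfies $d(F, G) \le \varepsilon/2 < \varepsilon$. Hence every open ball around $F$ meets $\mathscr{P} \setminus \mathsf{M}_n$, which shows $\mathsf{M}_n$ is nowhere dense.

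Combining the two steps, $\mathsf{M} = \bigcup_n \mathsf{M}_n$ is a countable union of $\tau$-closed nowhere dense sets, hence of the first Baire category. I anticipate no serious obstacle: the construction is elementary and the only minor subtlety is the right-continuity argument identifying $\{F : x_F < \infty\}$ with $\bigcup_n \{F : F(n) = 1\}$, together with the routine L\'{e}vy-metric estimate in the perturbation step.
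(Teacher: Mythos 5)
Your proof is correct and follows essentially the same route as the paper: decompose $\mathsf{M}$ into the sets $\{F: x_F\le n\}$ (your $\mathsf{M}_n=\{F:F(n)=1\}$ is the same set) and show each is closed with empty interior. The only cosmetic differences are that you establish closedness via the portmanteau theorem instead of a direct L\'{e}vy-metric estimate, and you perturb by a mixture $(1-\varepsilon/2)F+(\varepsilon/2)\delta_M$ rather than via the paper's truncation set $T(F,\varepsilon/2)$; both variants are fine.
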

\begin{proof}
Since $\mathsf{M}=\bigcup_{t\ge 1}\mathsf{M}_t$, where $\mathsf{M}_t:=\{F \in \mathscr{P}: x_F \le t\}$, it is enough to show that each $\mathsf{M}_t$ is nowhere dense. 

For, fix $t\ge 1$ and observe that $\mathsf{M}_t$ is closed. 
In fact, 
pick $F \in \mathscr{P}\setminus \mathsf{M}_t$ so that $x_F>t$. Let $n$ be a positive integer such that $t+\frac{1}{n}<x_F$ and $F(t+\frac{1}{n})\le 1-\frac{1}{n}$: this integer exists because, in the opposite, by the right continuity of $F$ we would obtain 
$$
1>F(t)=\lim_{n\to \infty}F\left(t+\frac{1}{n}\right)\ge \lim_{n\to \infty}\left(1-\frac{1}{n}\right)=1,
$$
which is a contradiction. At this point, pick $G \in B(F,\frac{1}{2n})$. Since $d(F,G)\le \frac{1}{2n}$, it follows that 
$G\left(t+\frac{1}{2n}\right)\le F\left(t+\frac{1}{n}\right)+\frac{1}{2n}<
1-\frac{1}{2n}$, thus $x_G\ge t+\frac{1}{2n}$. 
This implies that the open ball $B(F,\frac{1}{2n})$ has empty intersection with $\mathsf{M}_t$. 

In addition, $\mathsf{M}_t$ has empty interior: indeed, for each $F \in \mathsf{M}_t$ and $\varepsilon>0$, the open ball $B(F,\varepsilon)$ contains a distribution $G$ with $x_G>t$. To this aim, pick $G \in T\left(F,\frac{\varepsilon}{2}\right)$ with the property that $G(F^{\leftarrow}(1-\frac{\varepsilon}{2}))$ is exactly $1-\frac{\varepsilon}{2}$, and $\mu_G(\{\lfloor t+1\rfloor\})=\frac{\varepsilon}{2}$. 
Therefore $G \in B(F,\varepsilon)$, thanks to Claim \ref{claim:TincludedB}, and $x_G=\lfloor t+1\rfloor>t$, so that $G\notin \mathsf{M}_t$.
\end{proof}

Putting together Theorem \ref{thm:limitdistributions}, Theorem \ref{thm:necessaryconditions}, and the above Claim \ref{claim:Mnowheredense}, it follows that it is sufficient to show that both sets 
$\mathsf{D}_+$ and $\mathsf{D}_0\setminus \mathsf{M}$ 
are of the first Baire category in $\mathscr{P}$. 

\begin{claim}\label{claim:D+meager}
$\mathsf{D}_+$ is a set of the first Baire category in $\mathscr{P}$. 
\end{claim}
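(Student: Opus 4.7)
The plan is to apply Theorem \ref{thm:banachmazur} by constructing a winning strategy for Player II that violates the necessary condition in Theorem \ref{thm:necessaryconditions}\ref{item:1necessary}. Specifically, Player II will force every $G \in \bigcap_{m} V_m$ to satisfy $(1-G(2^{n_m+1}))/(1-G(2^{n_m})) \to 1$ along odd stages and $\to 0$ along even stages, for a sequence of indices $(n_m)$ chosen during the play; this prevents the limit from existing in $(0,1)$ and therefore from having $G \in \mathsf{D}_+$.

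Inductively pick indices $n_1<n_2<\cdots$ and build $(V_m)$ as follows. At stage $m$, given Player I's open set $U_m$, select $F \in U_m$ (which by a small L\'{e}vy perturbation inside $U_m$ we may assume continuous at each previously chosen test point $2^{n_j},2^{n_j+1}$ with $j<m$), then fix $\varepsilon_m>0$ so small that $T(F,\varepsilon_m)\subseteq U_m$ by Claim \ref{claim:TincludedB} and that $L_m:=F^{\leftarrow}(1-\varepsilon_m)$ exceeds $2^{n_{m-1}+1}$. Choose $n_m>n_{m-1}$ with $2^{n_m}>L_m$. Construct $F_m \in T(F,\varepsilon_m)$ by redistributing the free $\varepsilon_m$ of tail mass on $[L_m,\infty)$ as atoms: if $m$ is odd, put mass $\varepsilon_m(1-1/m)$ at $L_m$ and $\varepsilon_m/m$ at a single point $K_m>2^{n_m+1}$, yielding $(1-F_m(2^{n_m+1}))/(1-F_m(2^{n_m}))=1$; if $m$ is even, put mass $\varepsilon_m(1-1/m)$ at $L_m$, mass $\varepsilon_m(1-1/m)/m$ at a point $y_m \in (2^{n_m},2^{n_m+1})$, and mass $\varepsilon_m/m^2$ at $K_m>2^{n_m+1}$, yielding ratio $1/m$. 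Since $F_m$ coincides with $F$ below $L_m$ and all its new atoms on $[L_m,\infty)$ are strictly away from the test points, $F_m$ is continuous at each $2^{n_j},2^{n_j+1}$ with $j\le m$.

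Define $V_m:=B(F_m,\rho_m)$, with $\mathrm{diam}(V_m)<2^{-m}$ and $\rho_m$ small enough that the L\'{e}vy bound $F_m(x-\rho_m)-\rho_m \le G(x) \le F_m(x+\rho_m)+\rho_m$ gives
$$
\left|\frac{1-G(2^{n_j+1})}{1-G(2^{n_j})}-\frac{1-F_m(2^{n_j+1})}{1-F_m(2^{n_j})}\right| < \frac{1}{m}
$$
for every $G \in V_m$ and every $j\le m$. Then any $G \in \bigcap_m V_m$ has ratios tending to $1$ along odd $m$ and to $0$ along even $m$, so $G \notin \mathsf{D}_+$ by Theorem \ref{thm:necessaryconditions}\ref{item:1necessary}, and Theorem \ref{thm:banachmazur} yields the claim.

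The main technical obstacle is the last step: since the denominator $1-F_m(2^{n_m})$ is only of size $\varepsilon_m/m$, the radius $\rho_m$ must be much smaller than this in order to preserve the ratio within $1/m$. This is achievable precisely because $F_m$ has been arranged continuous at every test point of interest, so that the L\'{e}vy inequality translates into $|G(2^{n_j\pm 1})-F_m(2^{n_j\pm 1})|=O(\rho_m)$; taking $\rho_m$ sufficiently small, uniformly over the finitely many constraints from stages $j\le m$, yields the required precision simultaneously at all of them.
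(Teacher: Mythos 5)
Your proposal is correct and follows essentially the same route as the paper: a Banach--Mazur strategy that plants tail atoms so as to pin down $1-G$ at the test points $2^{n_m}$ and $2^{n_m+1}$ and thereby violate Theorem \ref{thm:necessaryconditions}.\ref{item:1necessary}. The paper's version is slightly leaner---it forces the ratio near $1$ at \emph{every} stage (so the limit, if it exists, cannot lie in $(0,1)$) and sidesteps the continuity-at-test-points issue by leaving a mass-free gap of width $1$ around $[2^{n_m},2^{n_m+1}]$---but these are implementation details rather than a different argument.
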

\begin{proof}
Apply the Banach--Mazur game described in Theorem \ref{thm:banachmazur} to $\mathscr{S}=\mathsf{D}_+$. We construct recursively the strategy of Player II, together with a strictly increasing sequence $(n_m)_{m\ge 0}$ of nonnegative integers with $n_0:=0$. 

Fix an integer $m\ge 1$ and suppose that, at the $m$th move, Player I choose an open set $U_m$, and that the integers $n_0,n_1,\ldots,n_{m-1}$ have been already defined. 
Then $U_m$ contains an open ball $B(F_m,\varepsilon_m)$ for some $F_m \in \mathscr{P}$ and $\varepsilon_m \in (0,1)$. 
Let also $n_m$ be the smallest integer such that $n_m>n_{m-1}$ and  $2^{n_m}>1+F_m^{\leftarrow}(1-\frac{\varepsilon_m}{2})$. 
At this point, let $G_m \in T(F_m,\frac{\varepsilon_m}{2})$ be the unique distribution with the property that 
$$
G_m\left(F_m^{\leftarrow}\left(1-\frac{\varepsilon_m}{2}\right)\right)=1-\frac{\varepsilon_m}{2}
\quad \text{ and }\quad 
\mu_{G_m}(\{2^{n_{m}+1}+1\})=\frac{\varepsilon_m}{2}.
$$
It follows by Claim \ref{claim:TincludedB} that $G_m \in U_m$. Since $U_m$ is open there exists $\delta_m>0$ such that $B(G_m,\delta_m)\subseteq U_m$. In addition, we have by construction that 
$$
\mu_{G_m}((2^{n_m}-1,2^{n_m+1}+1))=0.
$$
Finally, the strategy of Player II is to choose the open set 
\begin{equation}\label{eq:choiceVm}
V_m:=B\left(G_m,\min\left\{\delta_m,\frac{\varepsilon_m}{2^{m}}\right\}\right). 
\end{equation}

To conclude the proof, let us prove that it is indeed a winning strategy, that is, the distribution $F \in \bigcap_m V_m$ does not belong to $\mathsf{D}_+$. Since $F \in V_m$ for all $m\ge 1$, it follows that $d(F,G_m)\le \varepsilon_m/2^{m}$, so that 
\begin{displaymath}
\begin{split}
    \left(1-\frac{\varepsilon_m}{2}\right)-\frac{\varepsilon_m}{2^{m}}&=G_m\left(2^{n_m}-\frac{\varepsilon_m}{2^{m}}\right)-\frac{\varepsilon_m}{2^{m}}\le F(2^{n_m})\\ 
    &\le F(2^{n_m+1})\le G_m\left(2^{n_m+1}+\frac{\varepsilon_m}{2^{m}}\right)+\frac{\varepsilon_m}{2^{m}}=\left(1-\frac{\varepsilon_m}{2}\right)+\frac{\varepsilon_m}{2^{m}}.
\end{split}
\end{displaymath}
This implies that 
$$
\forall m\ge 1, \quad 
\frac{1-F(2^{n_{m}+1})}{1-F(2^{n_{m}})}
\ge \frac{2^{-1}-2^{-m}}{2^{-1}+2^{-m}}
$$
therefore 
$$
\limsup_{n\to \infty}\frac{1-F(2^{n}+1)}{1-F(2^{n})}=1,
$$
so that $F\notin \mathsf{D}_+$ by Theorem \ref{thm:necessaryconditions}.\ref{item:1necessary}. 
\end{proof}

\begin{claim}\label{claim:D0menoMmeager}
$\mathsf{D}_0\setminus \mathsf{M}$ is a set of the first Baire category in $\mathscr{P}$. 
\end{claim}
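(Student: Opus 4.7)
The plan is to apply the Banach--Mazur game of Theorem \ref{thm:banachmazur} with $\mathscr{S}=\mathsf{D}_0\setminus \mathsf{M}$ and to design a winning strategy for Player II whose structure closely parallels the proof of Claim \ref{claim:D+meager}. The relevant necessary condition is now the ``In particular'' part of Theorem \ref{thm:necessaryconditions}.\ref{item:3necessary}: every $F\in \mathsf{D}_0\setminus \mathsf{M}$ satisfies $\int_0^\infty (1-F(z))\,\mathrm{d}z<\infty$. Player II will therefore try to force the opposite $\int_0^\infty(1-F(z))\,\mathrm{d}z=+\infty$ for every $F$ in the intersection $\bigcap_{m}V_m$, which rules out membership in $\mathsf{D}_0\setminus \mathsf{M}$ at once.

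At the $m$th move, suppose Player I has just chosen an open set $U_m\supseteq B(F_m,\varepsilon_m)$. I would set $c_m:=F_m^{\leftarrow}(1-\varepsilon_m/2)$ and pick $b_m$ comfortably larger than $c_m$, e.g.\ $b_m:=c_m+(4m+1)/\varepsilon_m$. Player II then builds $G_m$ by letting it agree with $F_m$ on $(-\infty,c_m)$, be constantly equal to $1-\varepsilon_m/2$ on $[c_m,b_m)$, and jump to $1$ at $b_m$ (so $G_m$ puts a ``flat plateau'' of height $1-\varepsilon_m/2$ followed by an atom of mass $\varepsilon_m/2$ at $b_m$). One checks that $G_m\in T(F_m,\varepsilon_m/2)$, so $G_m\in U_m$ by Claim \ref{claim:TincludedB}; since $U_m$ is open, Player II plays $V_m:=B(G_m,\eta_m)$ for some $\eta_m\in (0,\varepsilon_m/4)$ small enough that $V_m\subseteq U_m$.

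To confirm that this wins, take any $F\in \bigcap_{m\ge 1}V_m$. The L\'{e}vy bound $F(x)\le G_m(x+\eta_m)+\eta_m$, applied on $x\in [c_m+\eta_m,b_m-\eta_m)$ where $x+\eta_m<b_m$ still sits on the plateau, should give $1-F(x)\ge \varepsilon_m/2-\eta_m\ge \varepsilon_m/4$. Integrating and exploiting the size of $b_m-c_m$,
\[
\int_0^\infty (1-F(z))\,\mathrm{d}z\ \ge\ (b_m-c_m-2\eta_m)\left(\tfrac{\varepsilon_m}{2}-\eta_m\right)\ \ge\ m
\]
for every $m\ge 1$. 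Hence $\int_0^\infty (1-F(z))\,\mathrm{d}z=+\infty$, so Theorem \ref{thm:necessaryconditions}.\ref{item:3necessary} prevents $F\in \mathsf{D}_0\setminus \mathsf{M}$, and Theorem \ref{thm:banachmazur} closes the argument.

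The main subtlety I anticipate is the simultaneous calibration of $b_m$ and $\eta_m$: $b_m$ must grow fast enough in $1/\varepsilon_m$ and $m$ so that the plateau still contributes at least $m$ to the integral after the L\'{e}vy shift trims both endpoints, while $\eta_m$ has to be chosen sufficiently small relative to $\varepsilon_m$ for the lower bound $1-F\ge \varepsilon_m/4$ to survive on the entire subinterval $[c_m+\eta_m,b_m-\eta_m)$. Verifying that $G_m$ is a bona fide right-continuous distribution, and that $V_m$ is a nonempty open subset of $U_m$, is routine.
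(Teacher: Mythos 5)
Your strategy is the same as the paper's: run the Banach--Mazur game, have Player II answer with a distribution that agrees with $F_m$ up to the $(1-\varepsilon_m/2)$-quantile, then stays flat at height $1-\varepsilon_m/2$ over a long interval before jumping to $1$, and use the L\'{e}vy-metric stability of that plateau to force $\int_0^\infty(1-F(z))\,\mathrm{d}z=\infty$ for every $F$ in $\bigcap_m V_m$, contradicting the ``in particular'' part of Theorem \ref{thm:necessaryconditions}.\ref{item:3necessary}. The paper's only cosmetic difference is that it reuses the $\mathsf{D}_+$ construction verbatim (plateau on $(2^{n_m}-1,\,2^{n_m+1}+1)$, atom at $2^{n_m+1}+1$, radius $\min\{\delta_m,\varepsilon_m/2^m\}$) and merely adds the length condition $2^{n_m}>4m/\varepsilon_m$.

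There is one genuine hole in your calibration, though: you anchor the plateau at $c_m=F_m^{\leftarrow}(1-\varepsilon_m/2)$, which can be arbitrarily negative (take $F_m$ concentrated far to the left), so the interval $[c_m+\eta_m,\,b_m-\eta_m)$ need not be contained in $[0,\infty)$ and your displayed inequality $\int_0^\infty(1-F(z))\,\mathrm{d}z\ge (b_m-c_m-2\eta_m)(\varepsilon_m/2-\eta_m)$ does not follow; in the extreme case $b_m<0$ the plateau contributes nothing to the tail integral. The fix is trivial --- take $b_m:=\max\{c_m,0\}+(4m+1)/\varepsilon_m$ and integrate only over the positive part of the plateau --- and this is exactly what the paper's requirement $2^{n_m}>1+F_m^{\leftarrow}(1-\varepsilon_m/2)$ together with $2^{n_m}>4m/\varepsilon_m$ is engineered to guarantee. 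With that adjustment your argument is complete; everything else (membership of $G_m$ in $T(F_m,\varepsilon_m/2)$, the bound $1-F(x)\ge\varepsilon_m/2-\eta_m\ge\varepsilon_m/4$ on the trimmed plateau, and the appeal to Theorem \ref{thm:banachmazur}) checks out.
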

\begin{proof}
It goes on the same lines of the proof of Claim \ref{claim:D+meager}, the only difference being the $n_m$ satisfies the additional condition $2^{n_m}>4m/\varepsilon_m$. The strategy of Player II is to choose the open set $V_m$ as in \eqref{eq:choiceVm}.

Similarly, to conclude the proof, let us prove that it is indeed a winning strategy. Pick the distribution $F \in \bigcap_m V_m$ and observe that 
\begin{displaymath}
\begin{split}
    \int_0^\infty (1-F(z))\,\mathrm{d}z 
    &\ge \int_{2^{n_m}}^{2^{n_m+1}} (1-F(z))\,\mathrm{d}z \\
    &\ge \int_{2^{n_m}}^{2^{n_m+1}} \left(1-G_m\left(z+\frac{\varepsilon_m}{2^m}\right)-\frac{\varepsilon_m}{2^m}\right)\,\mathrm{d}z \\
    &=2^{n_m}\cdot \left(1-\left(1-\frac{\varepsilon_m}{2}\right)-\frac{\varepsilon_m}{2^m}\right)\ge 2^{n_m}\cdot \frac{\varepsilon_m}{4} \ge m
\end{split}
\end{displaymath}
for all $m\ge 2$, which implies that $\int_0^\infty (1-F(z))\,\mathrm{d}z=\infty$. We conclude by Theorem \ref{thm:necessaryconditions}.\ref{item:3necessary} that $F \in \mathsf{D}_0\setminus \mathsf{M}$, completing the proof. 
\end{proof}

The conclusion (for the case $k=1$) follows putting together Claim \ref{claim:D+meager} and Claim \ref{claim:D0menoMmeager}. 

\bigskip 

\textsc{Category of $\mathsf{D}$: case $k\ge 2$.} Let us assume now that $k\ge 2$ and let $\mathsf{D}^{(1)}$ be the collection of marginal distributions $F^{(1)}$ on the first coordinate of all $F \in \mathsf{D}$. It is clear by Definition \ref{defi:main} that 
$$
\mathsf{D}\subseteq \{F \in \mathscr{P}: F^{(1)} \in \mathsf{D}^{(1)}\}.
$$
It follows by the previous case that there exist closed sets $(S^{(1)}_1,S^{(1)}_2,\ldots)$ of distributions over $\mathbf{R}$ such that $\mathrm{D}^{(1)}\subseteq \bigcup_n S^{(1)}_n$ and each $S^{(1)}_n$ has empty interior. For each $n\ge 1$, set $S_n:=\{F \in \mathscr{P}: F^{(1)} \in S^{(1)}_n\}$. Hence it is enough to show that each $S_n$ is nowhere dense in $\mathscr{P}$ (the proof is standard and we include it for the sake of completeness). 

For, suppose that $(F_1,F_2,\ldots)$ is weakly convergent to $F$, with $\{F_1,F_2,\ldots\} \subseteq S_n$. Then $(F_1^{(1)},F_2^{(1)},\ldots)$ is weakly convergent to $F^{(1)}$ by the open mapping theorem, and this implies that $F^{(1)} \in S_n^{(1)}$, hence $F \in S_n$. Therefore $S_n$ is closed. Lastly, let us assume for the sake of contradiction that $S_n$ contains a nonempty open set $U\subseteq \mathscr{P}$. Equivalently, for every sequence $(F_1,F_2,\ldots)$ in $\mathscr{P}$ weakly convergent to $F \in U$ we have $F_n \in U$ for all sufficiently large $n$. 
This implies, again by the open mapping theorem, that for every sequence $(F^{(1)}_1,F^{(1)}_2,\ldots)$ of distributions on $\mathbf{R}$ weakly convergent to $F^{(1)} \in V:=\{G^{(1)}: G \in U\}$ we have $F^{(1)}_n \in V$ for all sufficiently large $n$. Hence $V$ would be a nonempty open subset of $S^{(1)}_n$, which is impossible. This proves that each $S_n$ is nowhere dense, hence $\mathsf{D}$ is of the first Baire category in $\mathscr{P}$. 

\bigskip

\textsc{Denseness of $\mathsf{D}$.} Fix an open ball $B(F_0,\varepsilon)$, with $F_0 \in \mathscr{P}$ and $\varepsilon>0$. 
Let $F_\star \in \mathscr{P}$ be the distribution on $\mathbf{R}^k$  defined by $F_\star=\Phi^k$, where $\Phi$ is the standard normal distribution and define 
$$
x_0:=F_0^{\leftarrow}\left(1-\frac{\varepsilon}{2}\right)e
\quad \text{ and }\quad 
x_\star:=F_\star^{\leftarrow}\left(1-\frac{\varepsilon}{2}\right)e=\Phi^{-1}\left(\left(1-\frac{\varepsilon}{2}\right)^{1/k}\right)e. 
$$
At this point, let $F$ be the unique distribution on $\mathbf{R}^k$ such that 
\begin{equation}\label{eq:definitionFdenseness}
\forall x \in \mathbf{R}^k, \quad 
F(x):=
\begin{cases}
\,F_0(x)\,\,\,& \text{ if }x<x_0,\\
\,F_\star(x-x_0+x_\star)\,\,& \text{ otherwise}.\\
\end{cases}
\end{equation}
It follows by construction that $F \in T(F_0,\varepsilon/2)$, hence $F\in B(F_0,\varepsilon)$ by Claim \ref{claim:TincludedB}. To conclude the proof, it will be enough to show that $F \in \mathsf{D}$. 

First, it follows by \cite[Example 1.1.7]{MR2234156} that 
$$
\forall r \in \mathbf{R}, \quad 
\lim_{n\to \infty}(\Phi(a_nr+b_n))^n=G_0(r)=\mathrm{exp}(-e^{-r}), 
$$
with $a_n:=(2\log n-\log \log n-\log (4\pi))^{-1/2}$ and $b_n:=1/a_n$ for all integers $n\ge 2$. 
In particular, $\Phi \in \mathsf{D}_0\setminus \mathsf{M}$. 
Thanks to \cite[Theorem 3.2]{MR1700749} and the fact that $\mathbf{R}^k$ is separable, we obtain that $F_\star \in \mathrm{D}(G_\star)$, where $G_\star:=G_0^k \in \mathscr{G}$; more precisely, 
$$
\forall x \in \mathbf{R}^k, \quad 
\lim_{n\to \infty}\left(F_\star(a_nx_1+b_n,\ldots,a_nx_k+b_n)\right)^n=G_\star(x).
$$

We are going to show that $F \in \mathrm{D}(G_\star)$ as well. For, fix a vector $x=(x_1,\ldots,x_k) \in \mathbf{R}^k$ and note by the definition of $F$ in \eqref{eq:definitionFdenseness} that $F_\star(x)=F(x+x_0-x_\star)$ provided that $x\not<x_\star$. 
In particular, setting $c_n:=a_n$ and $d_n:=a_n(x_{0,1}-x_{\star,1})+b_n$ for all $n\ge 2$, it follows that $$
F_\star(a_nx_1+b_n,\ldots,a_nx_k+b_n)=
F(c_nx_1+d_n,\ldots,c_nx_k+d_n)
$$
whenever $(a_nx_1+b_n,\ldots,a_nx_k+b_n)\not< x_\star$. However, the latter inequality is certainly verified whenever $n$ is sufficiently large (since $a_n\to 0^+$ and $b_n\to \infty$ as $n\to \infty$). Therefore
$$
\lim_{n\to \infty}\left(F(c_nx_1+d_n,\ldots,c_nx_k+d_n)\right)^n=G_\star(x),
$$
and by the arbitrariness of $x$ we conclude that $F \in \mathsf{D}(G_\star)\subseteq \mathsf{D}$. 
\end{proof}

\begin{rmk}
As it is evident from the proof above, $\mathsf{D}(G_0^k)$ is dense in $\mathscr{P}$. 
\end{rmk}

\begin{proof}
[Proof of Corollary \ref{cor:freeprobability}] 
Thanks to \cite[Theorems 6.10]{MR2271490}, there are three possible type of limit distributions $G$ in \eqref{eq:freeconstraint} (which are different from the classical distributions $G_\gamma$ given in Theorem \ref{thm:limitdistributions}). 
At the same time, there is still a close relationship between their corresponding domain of attractions. Indeed \cite[Theorems 6.11--6.13]{MR2271490} imply that $\mathsf{D}^{\text{free}}=\mathsf{D}$. The conclusion follows by Theorem \ref{thm:mainDsmallcategory} (in the case $k=1$). 
\end{proof}

%%%%%%%%%%%%%%%%%%%%%%%%%%%%%%%%%%%%%%%%%%%%

\section{Closing Remarks}

It follows by Theorem \ref{thm:mainDsmallcategory} that $\mathsf{D}$ is not closed. It is also easy to see that it is not open. For, fix an open ball $B(F,\varepsilon)$ with $F \in \mathsf{D}$ and $\varepsilon>0$. Then it is enough to pick a distribution $G \in T(F,\varepsilon/2)$ such that $\int_0^\infty (1-G(z))\,\mathrm{d}z=\infty$ and $\mu_G([2^n,2^{n+1}])=0$ for infinitely many $n$. Then $G\notin \mathsf{D}$, thanks to Theorem \ref{thm:necessaryconditions} and Claim \ref{claim:TincludedB}. We leave as open question for the interested reader to establish the topological complexity of $\mathsf{D}$ and, in particular, whether it is a Borel set. 

In addition, it would be interesting to have a measure [non-]analogue of Theorem \ref{thm:mainDsmallcategory}. 
Indeed, it is well known that there exists a dichotomy between measure and category, see e.g. Oxtoby's classical book \cite{MR584443}. 
For instance, the set of normal numbers in $[0,1]$ has full Lebesgue measure and, on the other hand, it is of the first Baire category. 
In our case, the difficulty relies in the lack of a natural candidate of \textquotedblleft uniform probability measure\textquotedblright\, on $\mathscr{P}$. 

\subsection{Acknowledgments.} 
P.~Leonetti is grateful to PRIN 2017 (grant 2017CY2NCA) for financial support.

\bibliographystyle{plain}

\end{document}